\DeclareSymbolFont{cyrletters}{OT2}{wncyr}{m}{n}
\DeclareMathSymbol{\Sha}{\mathalpha}{cyrletters}{"58}
\newcommand\wt[1]{\widetilde{#1}}
\newtheorem{thm}{Theorem}[section]
\newtheorem{defn}[thm]{Definition}
\newtheorem{lem}[thm]{Lemma}
\newtheorem{prop}[thm]{Proposition}
\newtheorem{rem}[thm]{Remark}
\newcommand{\p}{\mathbf{P}}
\newcommand{\Q}{\mathbf{Q}}
\newcommand{\A}{\mathbf{A}}
\newcommand{\Z}{\mathbf{Z}}
\newcommand{\KK}{\mathcal{K}}
\newcommand{\F}{\mathbf{F}}
\newcommand{\Km}{\operatorname{Km}}
\title{On $p$-adic density of rational points on K3 surfaces}
\author{Ren\'e Pannekoek}
\date{\today}
\begin{document}

\maketitle
\abstract{We show that, for every prime number $p$, there exist infinitely many K3 surfaces over $\Q$ whose rational points lie dense in the space of its $p$-adic points. We also show that there exists a K3 surface over $\Q$ whose rational points lie dense in the space of its $p$-adic points for all prime numbers $p$ with $p \equiv 3 \pmod{4}$ and $p>7$.}

\section{Introduction}

In an unpublished preprint \cite{unpub}, Sir Peter Swinnerton-Dyer gave three non-singular diagonal quartic surfaces over $\Q$ together with a proof that their rational points lie dense in the space of $2$-adic points. To the author's best knowledge, this is the first instance of a proof of $p$-adic density of rational points on any K3 surface over $\Q$, for any prime number $p$. The goal of this article is to extend the results of Swinnerton-Dyer to all prime numbers $p$, giving for each $p$ an infinite number of K3 surfaces over $\Q$ on which the rational points form a $p$-adically dense set.

The K3 surfaces for which we will obtain $p$-adic density results will be Kummer surfaces. For an abelian variety $A$ over a field of characteristic different from $2$, let $\Km(A)$ denote the Kummer variety of $A$. It is the blow-up of the quotient $A/\left\langle -1 \right\rangle$ in the image of the $2$-torsion of $A$. When $A$ is an abelian variety of dimension $2$, the surface $\Km(A)$ is a K3 surface.

We will establish the following results.
\begin{thm}
\label{swd}
Let $p$ be a prime number. Then there exist infinitely many elliptic curves $E$ over $\Q$ such that the rational points of $\Km(E \times E)$ lie dense in the space of $p$-adic points.
\end{thm}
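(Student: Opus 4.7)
The natural rational map $\phi\colon E\times E\to \Km(E\times E)$, defined away from $2$-torsion, is typically not surjective on $\Q_p$-points, because the fiber of $(E\times E)/\langle-1\rangle$ over a point may be a Galois orbit over a quadratic extension of $\Q_p$. Accordingly $\Km(E\times E)(\Q_p)$ decomposes, in the $p$-adic topology and away from the exceptional divisor, into finitely many clopen subsets indexed by $d_p\in\Q_p^\times/(\Q_p^\times)^2$, the $d_p$-th one being the image of $E^{d_p}(\Q_p)\times E^{d_p}(\Q_p)$ modulo $\pm 1$. Density in the whole of $\Km(E\times E)(\Q_p)$ therefore requires hitting each clopen subset densely. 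Crucially, the Kummer construction depends only on the quotient $A/\langle-1\rangle$, so $\Km(E\times E)\cong\Km(E^d\times E^d)$ over $\Q$ for any quadratic twist $E^d$ of $E$; hence rational points on $E^d(\Q)\times E^d(\Q)$ for every $d\in\Q^\times/(\Q^\times)^2$ all contribute to $\Km(E\times E)(\Q)$.

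The plan is therefore to construct an elliptic curve $E/\Q$ such that, for each class $d_p\in\Q_p^\times/(\Q_p^\times)^2$, there is a global twist $E^d$ with $d\equiv d_p$ locally at $p$ and with $E^d(\Q)$ $p$-adically dense in $E^d(\Q_p)$. Density on an elliptic curve can be checked via the exact sequence $0\to E^{d,1}(\Q_p)\to E^d(\Q_p)\to\wt{E^d}(\F_p)\to 0$ (assuming good reduction): it suffices that the rational points surject onto $\wt{E^d}(\F_p)$ and that there exists a rational point lying in the kernel of reduction whose formal-group parameter has $p$-adic valuation exactly $1$, since then its multiples are dense in the formal group $E^{d,1}(\Q_p)\cong p\Z_p$ (with a minor adjustment for $p=2$).

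The main obstacle is the simultaneous verification of rank-positivity and local density for twists covering every class in $\Q_p^\times/(\Q_p^\times)^2$. I would look for curves $E/\Q$ with rich rational structure (for instance full rational $2$-torsion, or a rational $2$-isogeny) so that explicit $2$-descent controls the ranks of the quadratic twists, and then verify the reduction and formal-group conditions at $p$ by direct computation, twist class by twist class. Finally, to obtain infinitely many non-isomorphic $E$, I would embed the whole construction in a one-parameter family in which the conditions are generic and open, so that infinitely many rational specializations produce curves with distinct $j$-invariants while preserving all of the properties above.
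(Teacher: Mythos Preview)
Your high-level strategy---decompose $\Km(E\times E)(\Q_p)$ (away from the blow-up locus) into clopen pieces indexed by $\Q_p^\times/(\Q_p^\times)^2$, note that $\Km(E\times E)\cong\Km(E^c\times E^c)$ for every $c\in\Q^\times$, and then for each local class $d_p$ find a global twist $E^c$ with $c\equiv d_p$ and $E^c(\Q)$ $p$-adically dense in $E^c(\Q_p)$---is exactly the paper's strategy. What is missing is a workable mechanism for the last step; your proposal to secure it via $2$-descent and case-by-case verification at $p$ is precisely the ``main obstacle'' you name, and you have not given a way past it.

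The paper sidesteps this obstacle entirely by a trick you do not mention. Rather than working with curves of good reduction and attempting to control ranks of twists, the paper chooses $E/\Q$ with \emph{additive} reduction at $p$ of Kodaira type in $\{\II,\III,\IV,\II^\ast,\III^\ast,\IV^\ast\}$. This class is closed under quadratic twisting, and for $p>7$ one shows (via the formal group over a ramified extension) that $E^d(\Q_p)\cong\Z_p\times\Z/m\Z$ with $m\in\{1,2,3\}$ for every $d$, hence $E^d(\Q_p)$ is \emph{procyclic}. Now comes the key step: given a topological generator $(z,w)\in E^d(\Q_p)$, approximate it by a rational pair $(u,v)$ and set $c=f(u)/v^2$; then $c$ is a $p$-adic square (so $c\equiv d$ in $\Q_p^\times/(\Q_p^\times)^2$), the point $(u,v)$ lies on $E^{cd}$ by construction, and it is a topological generator there. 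Thus a single rational point, produced by approximation rather than by any rank computation, already has dense multiples. No descent, no surjectivity onto $\wt{E}(\F_p)$, and no family argument are needed; infinitely many such $E$ exist simply because the Kodaira conditions are open. Your good-reduction framework would require, for each of the (four or eight) local twist classes, exhibiting enough global rational points to hit both a generator of the formal group and every element of $\wt{E^d}(\F_p)$---a much heavier task that you have not shown how to carry out.
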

\begin{thm}
\label{cmtrick}
There exists an elliptic curve $E$ over $\Q$ such that the rational points of $\Km(E \times E)$ lie dense in the space of $p$-adic points for all prime numbers $p$ with $p \equiv 3 \pmod{4}$ and $p>7$.
\end{thm}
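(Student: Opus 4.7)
The plan is to exploit complex multiplication so that a single elliptic curve $E$ works for every prime $p \equiv 3 \pmod{4}$ with $p > 7$. Take $E/\Q$ with CM by $\Z[i]$, for example a twist $y^2 = x^3 - d^2 x$ chosen so that $E(\Q)$ contains a point $P$ of infinite order. Over $\Q(i)$ the endomorphism $[i]$ of $E$ is defined, giving an automorphism $\tau = [i] \times [i]$ of $E \times E$. The nontrivial $\Gal(\Q(i)/\Q)$-conjugate of $\tau$ is $[-i] \times [-i] = ([-1]\times[-1]) \circ \tau$, and since the Kummer construction identifies points differing by $[-1]\times[-1]$, the automorphism $\tau$ descends to a $\Q$-rational automorphism $\bar\tau$ of $\Km(E\times E)$. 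This extra automorphism, not available for a generic elliptic curve, is the essence of the ``CM trick''.

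With $\bar\tau$ in hand, I would proceed as follows. First, produce the $\Q$-rational points $\overline{(mP,nP)} \in \Km(E\times E)(\Q)$ for $m,n\in\Z$. Second, enlarge this set by applying $\bar\tau$, the swap automorphism $(P,Q)\mapsto(Q,P)$, and any CM-descended variants (e.g.\ the one coming from $[i]\times[-i]$), to obtain a set $S \subset \Km(E\times E)(\Q)$. Third, for each prime $p\equiv 3\pmod{4}$ with $p>7$ and good reduction, show that $S$ is $p$-adically dense in $\Km(E\times E)(\Q_p)$.

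The key $p$-adic input is that for $p\equiv 3\pmod{4}$ the prime $p$ is inert in $\Z[i]$, so $E$ has supersingular reduction at $p$ and the formal group $\hat E$ over $\Z_p$ has endomorphism ring the maximal order of the unramified quadratic extension of $\Q_p$. Consequently a finite-index open subgroup of $E(\Q_p)$ carries the natural structure of a rank-one $\Z_p[i]$-module, and the $\Z[i]$-orbit of $P$, realized on the Kummer through $\bar\tau$, generates an open subgroup. Pulling this through the Kummer quotient and combining with the integer translations $(mP,nP)$ should then give the desired density.

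The main obstacle, and the technical heart of the argument, is showing uniformly in $p$ that the image of $P$ in $E(\Q_p)$ is not trapped in a proper $\Z_p[i]$-submodule. This reduces to a nonvanishing statement for the $p$-adic formal logarithm of $P$ modulo suitable ideals, which should hold for all but finitely many supersingular primes. The restriction $p>7$ is presumably there to exclude the few anomalous primes together with primes of bad reduction of $E$.
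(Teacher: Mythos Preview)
Your approach has a structural gap that the CM automorphism cannot close. Write $Y \subset \Km(E\times E)$ for the open affine piece $z^2 = f(x)f(y)$, $z \neq 0$, where $E\colon y^2 = f(x)$. The set $Y(\Q_p)$ breaks into clopen pieces indexed by the class of $f(\xi)$ in $\Q_p^{\ast}/\Q_p^{\ast 2}$, and for odd $p$ there are four such classes. Every point $\overline{(mP,nP)}$ lies in the piece where $f(\xi)\in\Q_p^{\ast 2}$, since $f(x(mP))=y(mP)^2$. Your automorphism $\bar\tau$ acts on $Y$ by $(x_1,x_2,z)\mapsto(-x_1,-x_2,-z)$; because $f(-x)=-f(x)$ for $f(x)=x^3-d^2x$, this carries you to the piece $f(\xi)\in -\Q_p^{\ast 2}$, which for $p\equiv 3\pmod 4$ is the nontrivial unit class. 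Neither $\bar\tau$, the swap, nor the $[i]\times[-i]$ variant ever changes the parity of $v_p(f(\xi))$, so your set $S$ is trapped in two of the four pieces. The remaining two pieces are nonempty (take $\xi\in p\Z_p^{\ast}$, $\eta=\xi$, $\zeta=f(\xi)$), so $S$ sits inside a proper clopen subset of $Y(\Q_p)$ and cannot be dense.

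There is a second problem with your $p$-adic input. The endomorphism $[i]$ is only defined over $\Q(i)$, and for $p\equiv 3\pmod 4$ the completion $\Q_p(i)$ is the unramified quadratic extension; thus $\Z[i]$ acts on $E(\Q_p(i))$, not on $E(\Q_p)$. Since $E(\Q_p)$ has $\Z_p$-rank one, it cannot be a rank-one $\Z_p[i]$-module as you assert. The paper's argument is entirely different and does not use any descended automorphism of the Kummer surface. Taking $E\colon y^2=x^3+x$, one shows that for every $d\in\Q_p^{\ast}$ the twist $E^d(\Q_p)$ is procyclic: when $v_p(d)=0$ this uses supersingularity to get $\widetilde{E^d}(\F_p)$ cyclic of order $p+1$, and when $v_p(d)=1$ one has additive reduction of Kodaira type~IV. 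Given any target $P\in Y(\Q_p)$, one reads off the class $d=f(\xi)$, approximates a topological generator of $E^d(\Q_p)$ by a rational point to produce a twist $E^c/\Q$ with $c/d\in\Q_p^{\ast 2}$ and $E^c(\Q)$ dense in $E^c(\Q_p)$, and then pushes forward along $q_c\colon E^c_0\times E^c_0\to Y$. Varying $c$ over all four classes is exactly what reaches every piece of $Y(\Q_p)$; the role of CM is solely to guarantee supersingular reduction, hence procyclicity, uniformly at all $p\equiv 3\pmod 4$ with $p>7$.
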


The proofs of Theorems \ref{swd} and \ref{cmtrick} are given at the end of Section \ref{main}. 

We end this section by fixing some notation. If $E$ is an elliptic curve over any field $k$, and $c \in k^{\ast}$, then by $E^c$ we denote the quadratic twist of $E$ by $c$. If $E$ is given by a Weierstrass equation of the form $y^2=f(x)$, then $E^c$ is isomorphic to the elliptic curve given by $cy^2=f(x)$. By $E_0$ we denote the complement of $E[2]$ in $E$.

\section{Elliptic curves with suitable twists}

\begin{defn}\label{geschikt}
\upshape
We will say that an elliptic curve $E$ over $\Q$ has \textit{suitable twists} with respect to a prime number $p$ if for all $d \in \Q_p^{\ast}$ there exists $c \in \Q^{\ast}$ such that $d/c \in \Q_p^{\ast 2}$ and $E^c(\Q)$ is dense in $E^c(\Q_p)$.
\end{defn}
Theorem \ref{big} will show: if the elliptic curve $E$ over $\Q$ has suitable twists with respect to $p$, and we have $X = \Km(E \times E)$, then $X(\Q)$ is dense in $X(\Q_p)$.
\begin{rem}\upshape
The condition $d/c \in \Q_p^{\ast 2}$ appearing in Definition \ref{geschikt} is equivalent to the twists $E^c$ and $E^d$, considered as elliptic curves over $\Q_p$, being isomorphic over $\Q_p$. We may thus rephrase the fact of $E$ having suitable twists with respect to $p$ as follows: for all twists $E^d$ of $E$ over $\Q_p$, there exists a twist $E^c$ of $E$ over $\Q$ which is isomorphic to $E^d$ over $\Q_p$, for which $E^c(\Q)$ is dense in $E^c(\Q_p)$.
\end{rem}

The remainder of this section is used to show that there exist many suitable elliptic curves $E$ for any prime number $p$ (Proposition \ref{oneindigveel}). 

\begin{lem}
\label{swdlemma}
Let $p>7$ be a prime number. Let $E$ be an elliptic curve over $\Q_p$ with additive reduction, and write $E^{(0)}(\Q_p)$ for the subgroup of $E(\Q_p)$ consisting of the points that have good reduction. Then $E^{(0)}(\Q_p)$ is topologically isomorphic to $\Z_p$.
\end{lem}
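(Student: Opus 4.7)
The plan is to use the Néron model $\mathcal{E}/\Z_p$ of $E$ and the reduction map. Writing $\tilde{E}$ for the special fibre and $\tilde{E}^{\mathrm{ns}}$ for its smooth locus, $E^{(0)}(\Q_p)$ is by definition the preimage of $\tilde{E}^{\mathrm{ns}}(\F_p)$ under reduction, and this yields a short exact sequence of topological groups
\[ 0 \longrightarrow E^1(\Q_p) \longrightarrow E^{(0)}(\Q_p) \longrightarrow \tilde{E}^{\mathrm{ns}}(\F_p) \longrightarrow 0, \]
where $E^1(\Q_p)$ denotes the kernel of reduction.

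First I would identify the outer terms. Because $E$ has additive reduction, $\tilde{E}^{\mathrm{ns}} \cong \mathbb{G}_a$ over $\F_p$, so $\tilde{E}^{\mathrm{ns}}(\F_p) \cong \F_p$ has order exactly $p$. For the kernel of reduction, the formal group $\hat{E}$ of $E$ over $\Z_p$ gives $E^1(\Q_p) \cong \hat{E}(p\Z_p)$; since $p>2$, the formal logarithm converges on $p\Z_p$ and provides a topological group isomorphism $E^1(\Q_p) \cong p\Z_p \cong \Z_p$. In particular $E^1(\Q_p)$ is torsion-free, so $E^{(0)}(\Q_p)$ is a compact topological group sitting in an extension of $\F_p$ by $\Z_p$, hence topologically isomorphic to either $\Z_p$ or $\Z_p \times \F_p$. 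The two cases are distinguished by whether the extension splits, equivalently by whether $E^{(0)}(\Q_p)$ contains an element of exact order $p$.

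The remaining and, I expect, main step is to show the extension is non-split. My plan is to exhibit a topological generator: pick any $P \in E^{(0)}(\Q_p)$ whose reduction generates $\tilde{E}^{\mathrm{ns}}(\F_p) \cong \F_p$; then $pP \in E^1(\Q_p)$, and it suffices to prove that $pP$ is a topological generator of $E^1(\Q_p) \cong \Z_p$, i.e.\ that $\log_{\hat E}(pP)$ has $p$-adic valuation exactly one. The closure of $\langle P \rangle$ would then contain both a generator of $E^1(\Q_p)$ and a lift of a generator of $\tilde{E}^{\mathrm{ns}}(\F_p)$, hence equal $E^{(0)}(\Q_p)$, forcing $E^{(0)}(\Q_p) \cong \Z_p$. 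The hard part is the formal-group calculation giving $v_p(\log_{\hat E}(pP)) = 1$, and this is where I anticipate the hypothesis $p>7$ to be used: it makes the extension of $\Q_p$ over which $E$ acquires semistable reduction tame and of degree coprime to $p$ (and in particular small relative to $p$), which is precisely what is needed to control the higher-order terms of the group law near $P$ and to rule out the possibility of a $p$-torsion point on the identity component.
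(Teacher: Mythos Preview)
Your setup is correct and matches the paper exactly: the short exact sequence, the identification $E^{1}(\Q_p)\cong\Z_p$ via the formal logarithm, the fact that additive reduction gives $\tilde E^{\mathrm{ns}}(\F_p)\cong\F_p$, and the resulting dichotomy $E^{(0)}(\Q_p)\cong\Z_p$ or $\Z_p\times\F_p$ according to whether there is $p$-torsion. So far so good.

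The gap is in the final step. You reduce to showing $v_p\bigl(\log_{\hat E}(pP)\bigr)=1$ for a lift $P$ of a generator of $\tilde E^{\mathrm{ns}}(\F_p)$, but you do not carry this out; you only say you ``anticipate'' that $p>7$ will make the computation work via tameness. Since the whole content of the lemma beyond the standard exact-sequence setup is precisely ruling out $p$-torsion, this leaves the proof incomplete. Moreover, a direct attack on $v_p(\log_{\hat E}(pP))$ is awkward: $P$ itself is not in the domain of $\log_{\hat E}$, so you would first have to control $pP$ in the formal group, which is exactly the delicate point.

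The paper sidesteps this computation entirely with a base-change trick. Write $E$ in minimal Weierstrass form $y^2=x^3+ax+b$, pass to $K=\Q_p(\pi)$ with $\pi^6=p$, and rescale to $E':y^2=x^3+(a/\pi^4)x+(b/\pi^6)$ via $\phi(x,y)=(x/\pi^2,y/\pi^3)$. Then $\phi$ carries $E^{(0)}(\Q_p)$ into the kernel of reduction $(E')^{(1)}(K)$, and the formal logarithm identifies $(E')^{(1)}(K)$ with $\mathcal O_K$, which is torsion-free. The hypothesis $p>7$ enters exactly here: the formal logarithm is an isomorphism on the maximal ideal only when the absolute ramification index $e$ satisfies $e<p-1$, and for $K/\Q_p$ we have $e=6$. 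This gives torsion-freeness of $E^{(0)}(\Q_p)$ in one stroke, without ever computing $pP$.
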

\begin{proof}
This result was already observed by Swinnerton-Dyer as Lemma 1 of \cite{unpub}. The result appears with proof as Theorem 1 of \cite{Rene}. For the convenience of the reader, we here reproduce the arguments from \cite{Rene}.

From the theory of elliptic curves over local fields (see Chapter 7 of \cite{Silverman}), we get an exact sequence:
\begin{equation}
\label{Elocalfields}
0 \rightarrow E^{(1)}(\Q_p) \rightarrow E^{(0)}(\Q_p) \rightarrow \widetilde{E}_{\textnormal{ns}}(\mathbf{F}_p) \rightarrow 0,
\end{equation}
where $\widetilde{E}_{\textnormal{ns}}(\mathbf{F}_p)$ denotes the group of non-singular points over $\F_p$ on a minimal Weierstrass model of $E$, the arrow $E^{(0)}(\Q_p) \rightarrow \widetilde{E}_{\textnormal{ns}}(\mathbf{F}_p)$ is the reduction map, and we write $E^{(1)}(\Q_p)$ for the kernel of the reduction map. It follows from \cite[IV.6.4(b)]{Silverman} that $E^{(1)}(\Q_p)$ is canonically isomorphic to $\Z_p$. Since $E$ has additive reduction at $p$, we have $\widetilde{E}_{\textnormal{ns}}(\mathbf{F}_p) \cong \Z/p\Z$. Hence, the short exact sequence (\ref{splitses}) reads
$$
0 \rightarrow \Z_p \rightarrow E^{(0)}(\Q_p) \rightarrow \Z/p\Z \rightarrow 0.
$$
It follows that the topological group $E^{(0)}(\Q_p)$ is isomorphic to $\Z_p \times \Z/p\Z$ if and only if it has non-trivial $p$-torsion; otherwise, it is isomorphic to $\Z_p$. 

We will show that $E^{(0)}(\Q_p)$ has no non-trivial $p$-torsion. We will make use of a ramified field extension of $\Q_p$. Assume that $E$ is given by a Weierstrass equation $y^2=x^3+ax+b$ that is minimal at $p$. Let $K = \Q_p(\pi)$, with $\pi^6=p$. We define a new curve $E'$ given by $y^2 = x^3 + a/\pi^4 x + b/\pi^6$. There is an isomorphism $\phi : E \stackrel{\sim}{\rightarrow} E'$ defined over $K$, given by $\phi(x,y) = (x/\pi^2,y/\pi^3)$. Now $\phi$ injects $E^{(0)}(\Q_p)$ into the kernel of reduction $(E')^{(1)}(K)$ of $E'$, which by \cite[IV.6.4(b)]{Silverman} is isomorphic to the ring of integers of $K$ (this uses $p > 7$), which is torsion-free. So $E^{(0)}(\Q_p)$ is torsion-free, hence topologically isomorphic to $\Z_p$.
\end{proof}

We recall that a topological group $G$ is called procyclic if, for some $g \in G$, the subgroup generated by $g$ is dense in $G$. This element $g$ is called a topological generator of $G$.

\begin{lem}
\label{existenceresult}
Let $p$ be a prime. There exist infinitely many elliptic curves $E$ over $\Q$ such that, for all $d \in \Q_p^{\ast}$, the topological group $E^d(\Q_p)$ is procyclic.
\end{lem}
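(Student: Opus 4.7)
The plan is to convert the statement into a local problem at $p$, solve that local problem by an explicit construction, and then produce infinitely many rational elliptic curves by $p$-adic approximation.

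As a first reduction, note that $\Q_p^{\ast}/\Q_p^{\ast 2}$ is finite --- of order four when $p$ is odd, of order eight when $p=2$ --- so only finitely many $\Q_p$-isomorphism classes of twists $E^d$ occur, and it suffices to ensure procyclicity of each of these finitely many topological groups. Next, each group $E^d(\Q_p)$ is a compact, commutative, one-dimensional $p$-adic Lie group; using that for $p>2$ the formal group $(E^d)^{(1)}(\Q_p)$ is torsion-free and isomorphic to $\Z_p$, the standard structure theorem for finitely generated $\Z_p$-modules yields a topological isomorphism $E^d(\Q_p) \cong \Z_p \oplus E^d(\Q_p)_{\text{tors}}$ (the case $p=2$ requires only a small adjustment to handle the possible $2$-torsion in the formal group). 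A group of this shape is procyclic precisely when the torsion subgroup is cyclic and of order coprime to $p$, so the lemma reduces to finding $E/\Q$ such that each of the finitely many $E^d(\Q_p)_{\text{tors}}$ is cyclic of prime-to-$p$ order.

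The heart of the argument is then a single local construction. For $p>7$ Lemma~\ref{swdlemma} makes this fairly painless: take $E_0/\Q_p$ of Kodaira type $\II$ at $p$ --- for instance, when $p>3$, the curve $y^2 = x^3 + p$. Its four quadratic twist classes over $\Q_p$ fall into two of type $\II$ and two of type $\IV^{\ast}$, and in each case the geometric component group has order dividing $3$ and is therefore coprime to $p$. By Lemma~\ref{swdlemma} the identity component of each twist is topologically isomorphic to $\Z_p$, and since any extension of a cyclic group of order dividing $3$ by $\Z_p$ splits (as $\gcd(3,p)=1$), one obtains $E_0^d(\Q_p) \cong \Z_p \oplus \Z/n\Z$ with $n \in \{1,3\}$, which is procyclic. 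For the small primes $p \le 7$, where Lemma~\ref{swdlemma} is not available, one needs an ad hoc construction: either a different Kodaira type for which $(E_0^d)^{(0)}(\Q_p) \cong \Z_p$ can be verified directly, or an explicit Weierstrass equation whose twists are all analyzed by hand.

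Finally, globalization is standard: by Krasner's lemma applied to the Weierstrass coefficients, any $E/\Q$ whose coefficients are sufficiently $p$-adically close to those of $E_0$ is $\Q_p$-isomorphic to $E_0$, and this isomorphism identifies each twist $E^d$ with $E_0^d$. Since $\Z$ is dense in $\Z_p$, there are infinitely many such $E$, which proves the lemma. The main obstacle is the local construction itself, especially at the small primes: one must simultaneously control the Kodaira type, component group, and rational torsion across all (up to eight) twist classes, and the uniform input from Lemma~\ref{swdlemma} is unavailable when $p \le 7$.
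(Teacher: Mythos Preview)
Your approach is essentially the paper's: for $p>7$ you both choose $E$ with additive reduction of Kodaira type lying in $\{\II,\III,\IV,\II^{\ast},\III^{\ast},\IV^{\ast}\}$, invoke Lemma~\ref{swdlemma} to identify $E^{(0)}(\Q_p)$ with $\Z_p$, and split off a cyclic component group of order prime to $p$; for $p\le 7$ you both defer to explicit examples checked by hand. The only organizational difference is that the paper works over $\Q$ from the outset and uses the observation that this set of Kodaira types is closed under quadratic twisting, whereas you construct $E_0$ over $\Q_p$ first and then globalize. One correction on that last step: it is not true that elliptic curves with $p$-adically close Weierstrass coefficients are $\Q_p$-isomorphic --- the $j$-invariant moves --- so the appeal to ``Krasner's lemma'' is misplaced as stated. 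What you actually need, and what does hold, is that nearby coefficients yield the same Kodaira type (Tate's algorithm depends only on finitely many $p$-adic digits), so the procyclicity argument transfers verbatim; alternatively, simply observe that your $E_0 : y^2=x^3+p$ is already defined over $\Q$, and that $y^2=x^3+pm$ with $m\in\Z$ prime to $p$ furnishes infinitely many such curves directly.
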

\begin{proof}
First assume $p > 7$. Choose the elliptic curve $E$ over $\Q$ such that its Kodaira reduction type at $p$ is in the set $\KK = \{ \textnormal{II,III,IV,II${}^{\ast}$,III${}^{\ast}$, IV${}^{\ast}$} \}$. It is obvious that there are infinitely many such $E$ for each $p$ (e.g., see the table in \cite[C.15]{Silverman}). We will show that $E$ satisfies the conclusion of the lemma. Note that $E$ has additive reduction at $p$. The class of elliptic curves over $\Q$ with reduction type at $p$ contained in the set $\KK$ is stable under taking quadratic twists. Therefore, we may reduce to showing that $E(\Q_p)$ is procyclic. 

From \cite[C.15]{Silverman} we have that $E(\Q_p)$ fits inside a short exact sequence:
\begin{equation}
\label{splitses}
0 \rightarrow E^{(0)}(\Q_p) \rightarrow E(\Q_p) \rightarrow \Z/m \Z \rightarrow 0,
\end{equation}
where $m \in \{1,2,3\}$. By Lemma \ref{swdlemma}, the topological group $E^{(0)}(\Q_p)$ is isomorphic to $\Z_p$. Since $p \nmid m$ by assumption on $p$, it follows that (\ref{splitses}) splits, and that $E(\Q_p)$ is topologically isomorphic to $\Z_p \times \Z/m\Z$. This proves the lemma for $p > 7$.

For $p \leq 7$, we give examples. Let $E$ be an elliptic curve over $\Q_p$ given by $y^2=x^3+ax+b$. Then $E(\Q_p)$ is procyclic in each of the cases (i) $p=2$, $v_2(a) \geq 1$, $v_2(b) = 1$; (ii) $p=3$, $v_3(a)= 1$, $v_3(b) >1$; (iii) $p=5$, $v_5(a)\geq 1$, $v_5(b) =1$, $a \not\equiv \pm 10 \pmod{25}$; (iv) $p=7$, $v_7(a)\geq 1$, $v_7(b) =1$, $b \not\equiv \pm 14 \pmod{49}$. One simply shows this by looking at the various division polynomials associated to $E$, ruling out any unwanted torsion. This completes the proof for $p \leq 7$.
\end{proof}

We will now show that the property isolated in the preceding lemma leads to elliptic curves with suitable twists.

\begin{lem}
\label{laatstestap}
Let $p$ be a prime, and suppose that $E$ is an elliptic curve over $\Q$ such that, for all $d \in \Q_p^{\ast}$, the topological group $E^d(\Q_p)$ is procyclic. Then $E$ has suitable twists with respect to $p$.
\end{lem}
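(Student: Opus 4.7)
The plan is, for each $d \in \Q_p^{\ast}$, to exhibit a rational point $P$ on a suitably chosen twist $E^c$ (with $c/d \in \Q_p^{\ast 2}$) which is a topological generator of $E^c(\Q_p)$; since $\overline{\langle P \rangle} = E^c(\Q_p)$ then forces $E^c(\Q)$ to be dense in $E^c(\Q_p)$, this is all Definition \ref{geschikt} requires.

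Fix a Weierstrass model $y^2 = f(x)$ of $E$, so that every twist is cut out by $cy^2 = f(x)$. Given $d$, the group $E^d(\Q_p)$ is procyclic by hypothesis and infinite (it contains the formal-group copy of $\Z_p$), so it admits a topological generator $Q = (x_\ast, y_\ast)$; necessarily $Q$ has infinite order, hence $y_\ast \neq 0$ and $f(x_\ast)/d \in \Q_p^{\ast 2}$. I would then pick $x_0 \in \Q$ close enough to $x_\ast$ that Hensel's lemma keeps $f(x_0)/d \in \Q_p^{\ast 2}$, set $c := f(x_0) \in \Q^{\ast}$, and take $P := (x_0, 1) \in E^c(\Q)$. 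Automatically $c/d \in \Q_p^{\ast 2}$, and the $\Q_p$-isomorphism $\psi : E^c \to E^d$, $(x, y) \mapsto (x, \sqrt{c/d}\, y)$, sends $P$ to the $\Q_p$-point $(x_0, \sqrt{c/d})$, which lies $p$-adically close to $Q$ once we choose the appropriate sign of the square root (equivalently, replace $P$ by $-P$ if necessary).

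The one point requiring verification is that topological generators of $E^d(\Q_p)$ form an open subset, so that the approximation $\psi(P) \approx Q$ forces $\psi(P)$, and hence $P$, to be a topological generator of its procyclic group. This openness follows from the structure $E^d(\Q_p) \cong \Z_p \times \Z/m\Z$ with $\gcd(p,m) = 1$ that is implicit in Lemmas \ref{swdlemma} and \ref{existenceresult}: a pair $(a, b)$ is a topological generator exactly when $a \in \Z_p^{\ast}$ and $b$ generates $\Z/m\Z$, which are both open conditions. I do not anticipate a serious obstacle: the whole argument is $p$-adic approximation plus an open-condition check, with only routine bookkeeping around Hensel's lemma and the sign of $\sqrt{c/d}$.
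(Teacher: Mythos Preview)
Your proposal is correct and follows essentially the same route as the paper: approximate a $p$-adic topological generator of $E^d(\Q_p)$ by a rational point lying on a rational twist $E^c$ with $c/d \in \Q_p^{\ast 2}$, then transport through the $\Q_p$-isomorphism $E^c \cong E^d$ and use openness of the set of topological generators. The only quibble is your justification of that openness: it should be argued directly from the procyclic hypothesis together with the general structure $E^d(\Q_p) \cong \Z_p \times F$ for some finite group $F$ (procyclicity forces $F$ cyclic of order prime to $p$, whence generators are exactly $\Z_p^{\ast} \times F^{\ast}$, visibly open), rather than by appeal to Lemmas~\ref{swdlemma} and~\ref{existenceresult}, which concern only the particular curves constructed there and not the general $E$ of the present lemma.
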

\begin{proof}
Obviously, it suffices to assume $d=1$, and to show that there exists a twist $E^c$ of $E$ with $c \in \Q \cap \Q_p^{\ast 2}$ such that $E^c(\Q)$ is dense in $E^c(\Q_p)$.

Assume that $E$ is given by a Weierstrass curve $y^2=f(x)$ in $\p^2_\Q$. Let $(z,w)$ be a topological generator of $E(\Q_p)$. Let $(u,v) \in \A^2(\Q) \subset \p^2(\Q)$ be chosen sufficiently close to $(z,w)$, and such that both $f(u)$ and $v$ non-zero. Define $c = f(u)/v^2$. Since $c$ is arbitrarily close to $f(z)/w^2 = 1$, it is a $p$-adic square. Also, $(u,v)$ lies on the curve $cy^2=f(x)$, which we may identify with $E^c$. We claim that the multiples of $(u,v)$ lie dense in $E^c(\Q)$. Proof of claim: let $\alpha \in \Q_p^{\ast}$ be such that $\alpha^2 = c$. Note that $\alpha$ gets arbitrarily close to $-1$ or $1$. There is an isomorphism defined over $\Q_p$ given by:
\begin{align*}
\psi : E^c & \rightarrow E \\
(x,y) & \mapsto (x,\alpha y)
\end{align*}
Since $(u,v)$ was arbitrarily close to $(z,w)$, its image $(u,\alpha v)$ is arbitrarily close to $(z,\pm w)$, and both of these points are topological generators of $E(\Q_p)$. Since $\psi$ is a homeomorphism on $\Q_p$-points, $(u,v)$ is itself a topological generator of $E^c(\Q_p)$.
\end{proof}

\begin{prop}
\label{oneindigveel}
For any prime number $p$, there exist infinitely many elliptic curves $E$ over $\Q$ that have suitable twists with respect to $p$.
\end{prop}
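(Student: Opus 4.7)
The plan is simply to chain the two previous lemmas together. Lemma \ref{existenceresult} supplies, for each prime $p$, an infinite family $\mathcal{F}_p$ of elliptic curves $E/\Q$ with the property that $E^d(\Q_p)$ is a procyclic topological group for every $d \in \Q_p^{\ast}$. Lemma \ref{laatstestap} says exactly that any curve in $\mathcal{F}_p$ has suitable twists with respect to $p$. So it suffices to observe that the composition $\mathcal{F}_p \subseteq \{E/\Q : E \text{ has suitable twists at } p\}$ is an infinite set.

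The only thing to double-check is that $\mathcal{F}_p$ really is an infinite set of non-isomorphic curves (over $\Q$), rather than just an infinite family of Weierstrass equations. For $p > 7$ this is clear from the proof of Lemma \ref{existenceresult}, since there are infinitely many pairwise non-isomorphic elliptic curves over $\Q$ with Kodaira reduction type at $p$ in $\KK$; one can, for example, prescribe the reduction type at $p$ while varying the $j$-invariant freely among rational numbers whose $p$-adic valuation is compatible with the chosen type. For $p \leq 7$, the conditions (i)--(iv) in the proof of Lemma \ref{existenceresult} are open conditions on $(a,b) \in \Z_p^2$ carving out an infinite family, and choosing $(a,b) \in \Z^2$ satisfying these $p$-adic conditions while also imposing distinct $j$-invariants gives infinitely many non-isomorphic $E/\Q$.

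I do not expect any obstacle: once Lemmas \ref{existenceresult} and \ref{laatstestap} are in place, the proposition is essentially a restatement of their conjunction, and the only bookkeeping is the infinitude claim above, which follows from the flexibility in choosing $(a,b)$ (or $j$-invariant) within the constraints given at each prime.
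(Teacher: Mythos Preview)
Your proposal is correct and matches the paper's own proof, which simply states that the proposition follows from Lemma \ref{existenceresult} and Lemma \ref{laatstestap}. Your additional remarks on ensuring infinitely many non-isomorphic curves are more careful than the paper's one-line justification, but the approach is identical.
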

\begin{proof}
This follows from Lemma \ref{existenceresult} and Lemma \ref{laatstestap}.
\end{proof}

\section{Proofs of Theorems \ref{swd} and \ref{cmtrick}}
\label{main}

At the end of this section we give the proofs of Theorems \ref{swd} and \ref{cmtrick}.
\begin{thm}
\label{big}
Let $p$ be a prime number and let $E$ be an elliptic curve over $\Q$ that has suitable twists with respect to $p$. Let $X = \Km(E \times E)$. Then $X(\Q)$ is dense in $X(\Q_p)$. 
\end{thm}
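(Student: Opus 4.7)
The plan is to reduce the density claim on $X = \Km(E \times E)$ to a density claim on a product of twists $E^c \times E^c$, where the suitable-twists hypothesis of Definition \ref{geschikt} applies directly. Let $U \subset X$ be the Zariski-open complement of the $16$ exceptional divisors lying above the image of $(E \times E)[2]$ in the singular quotient $(E \times E)/\{\pm 1\}$; since $X$ is smooth, $U(\Q_p)$ is $p$-adically dense in $X(\Q_p)$, so it will suffice to approximate points of $U(\Q_p)$ by $\Q$-points of $X$. Given $P \in U(\Q_p)$, I pick a lift $(R, S) \in (E \times E)(\overline{\Q_p})$, whose Galois orbit is either $\{(R,S)\}$ or $\{(R,S),(-R,-S)\}$. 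In either case the map sending $\gamma \in \Gal(\overline{\Q_p}/\Q_p)$ to the sign $\epsilon(\gamma) \in \{\pm 1\}$ for which $\gamma(R,S) = \epsilon(\gamma)\cdot(R,S)$ is a quadratic character of the absolute Galois group, and Kummer theory assigns to it a unique class $d \in \Q_p^{\ast}/\Q_p^{\ast 2}$; by construction $(R,S)$ then becomes a $\Q_p$-point of $E^d \times E^d$.

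Next I would invoke Definition \ref{geschikt} to pick $c \in \Q^{\ast}$ with $d/c \in \Q_p^{\ast 2}$ and $E^c(\Q)$ dense in $E^c(\Q_p)$. The $\Q_p$-isomorphism $E^c \cong E^d$ sends $(R,S)$ to a $\Q_p$-point $(R',S')$ of $E^c \times E^c$, which by the suitable-twists hypothesis applied to each factor is approximated arbitrarily closely by elements of $(E^c \times E^c)(\Q)$. To transport these approximations back to $X$ I use the fact that the $\Q(\sqrt{c})$-isomorphism $\sigma \colon E \to E^c$ satisfies $\gamma \sigma = [-1]_{E^c} \circ \sigma$ for the nontrivial $\gamma \in \Gal(\Q(\sqrt{c})/\Q)$. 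Hence the diagonal morphism $(\sigma,\sigma) \colon E \times E \to E^c \times E^c$ has Galois cocycle equal to the diagonal $\{\pm 1\}$-involution, and therefore descends, after passing to the quotients by $\{\pm 1\}$ and to their Kummer desingularizations, to a $\Q$-isomorphism $\Km(E \times E) \cong \Km(E^c \times E^c)$. Under this identification the approximating $\Q$-rational points of $\Km(E^c \times E^c)$ yield $\Q$-rational points of $X$ converging to $P$.

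The hardest part of the argument is the structural step of recognizing $(R,S)$ as a pair on a \emph{single} common twist $E^d$: one has to check that Galois-equivariance of the unordered orbit $\{(R,S),(-R,-S)\}$ forces both coordinates to transform by the same quadratic character, which is exactly what allows the whole problem to be moved onto one $\Q$-rational curve $E^c$ via a single application of the suitable-twists hypothesis. Once that structural fact is in place, the remaining ingredients---the approximation inside $(E^c \times E^c)(\Q_p)$ and the descent of the diagonal twisting isomorphism to a $\Q$-isomorphism of Kummer surfaces---are routine consequences of the suitable-twists hypothesis and the functoriality of the Kummer construction on products.
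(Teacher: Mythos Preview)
Your proof is correct and follows essentially the same route as the paper's. The paper restricts to the dense open $Y=(E_0\times E_0)/\langle -1\rangle$ (written explicitly as $z^2=f(x)f(y)$ with $z\neq 0$), reads off the twist class as $d=f(\xi)$, and uses the explicit morphism $q_c\colon E^c_0\times E^c_0\to Y$, $((x_1,y_1),(x_2,y_2))\mapsto (x_1,x_2,cy_1y_2)$, to push $\Q$-points down to $X$; your descended $\Q$-isomorphism $\Km(E\times E)\cong\Km(E^c\times E^c)$ composed with the quotient map is exactly the paper's $q_c$, so the two arguments coincide in substance, yours being phrased via Galois cocycles and descent and the paper's via affine coordinates.
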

\begin{proof}
Recall that by $E_0$ we denote the complement of $E[2]$ in $E$. The inversion $-1$ on $E$ restricts to an involution of $E_0$, which we will also denote by $-1$. The quotient $(E_0 \times E_0)/\left\langle -1 \right\rangle$, where $-1$ acts diagonally, is a smooth subvariety $Y$ of $X$. Since no open neighborhood in $X(\Q_p)$ of a point in $X - Y$ can be contained in $X - Y$, it is enough to show that $Y(\Q)$ is dense in $Y(\Q_p)$. Observe that $Y$ may be identified with the open subset of
$$
z^2 = f(x)f(y),
$$
where $z$ is not equal to $0$.

Let $P = (\xi,\eta,\zeta)$ be a point of $Y(\Q_p)$. Let $d = f(\xi)$. By Definition \ref{geschikt}, there exists $c \in \Q^{\ast}$ such that $d/c \in \Q_p^{\ast}$ and $E^c(\Q)$ is dense in $E^c(\Q_p)$. We have a morphism:
\begin{align*}
q_c : E^c_0 \times E^c_0 & \rightarrow Y \\
(x_1,y_1),(x_2,y_2) & \mapsto (x_1,x_2,cy_1y_2)
\end{align*}
Furthermore, the point $P$ is the image under $q_c$ of the point $Q = ((\xi,1),(\eta,\zeta/f(\xi))) \in (E^c_0 \times E^c_0)(\Q_p)$. Since $E^c(\Q)$ is dense in $E^c(\Q_p)$, there exists a rational point $Q' \in (E^c_0 \times E^c_0)(\Q)$ such that $Q'$ is as close as we desire to $Q$, and hence such that $P' = q_c(Q') \in Y(\Q)$ is as close as we desire to $P$.
\end{proof}

\begin{rem}\upshape
What underlies our proof of Theorem \ref{big} is the fact that
$$
Y(\Q) = \coprod_{c} q_c((E^c_0 \times E^c_0)(\Q)),
$$
where the $q_c$ are as in the proof of Theorem \ref{big}, and where the $c$ are taken over a set of coset representatives of $\Q^{\ast}/\Q^{\ast 2}$ in $\Q^{\ast}$. The proof of Theorem \ref{big} relies on the existence, for any $P \in Y(\Q_p)$, of $c \in \Q^{\ast}$ such that (i) $P$ is in the image under $q_c$ of a $p$-adic point on $E^c_0 \times E^c_0$, and (ii) the rational points on $E^c_0 \times E^c_0$ lie $p$-adically dense. The existence of such a $c$ is precisely the condition that $E$ be suitable with respect to $p$.
\end{rem}

\begin{thm}
\label{j1728}
Let $E/\Q$ be the elliptic curve $y^2=x^3+x$ and let $X = \Km(E \times E)$. Then $X(\Q)$ is dense in $X(\Q_p)$ for all $p$ with $p \equiv 3 \pmod{4}$ and $p>7$.
\end{thm}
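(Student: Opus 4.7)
My plan is to reduce the statement to a purely group-theoretic assertion about local points, and verify it by cases. By Theorem \ref{big}, it suffices to show that $E: y^2=x^3+x$ has suitable twists with respect to every prime $p$ with $p \equiv 3 \pmod 4$ and $p>7$; by Lemma \ref{laatstestap}, it is further enough to prove that $E^d(\Q_p)$ is procyclic as a topological group for every $d \in \Q_p^{\ast}$. Since this property depends only on the class of $d$ in $\Q_p^{\ast}/\Q_p^{\ast 2}$, and the latter has four elements for odd $p$, represented by $\{1, u, p, up\}$ with $u$ a non-square unit, I split the analysis into two sub-cases distinguished by the parity of $v_p(d)$.

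For $v_p(d)=0$ the choice of $E$ enters decisively. Because $E$ has complex multiplication by $\Z[i]$ and $p \equiv 3 \pmod 4$ is inert in $\Z[i]$, the curve $E$ is supersingular at $p$ with $a_p=0$; hence any unit twist $E^d$ has good reduction and $|\widetilde{E^d}(\F_p)| = p+1$. Writing $\widetilde{E^d}(\F_p) \cong \Z/n_1 \times \Z/n_2$ with $n_1 \mid n_2$, the Weil pairing forces $n_1 \mid \gcd(p-1,p+1)=2$. The value $n_1=2$ would mean that $\widetilde{E^d}[2](\F_p)$ is the full Klein four-group, i.e., that $x^3+x$ splits over $\F_p$, which requires $-1$ to be a quadratic residue mod $p$ and fails for $p \equiv 3 \pmod 4$. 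Thus $n_1=1$, so $E^d(\Q_p) \cong \Z_p \times \Z/(p+1)$, which is procyclic because $\gcd(p,p+1)=1$.

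For $v_p(d)=1$ I would rewrite $E^d$ in the Weierstrass form $Y^2=X^3+d^2 X$; this model is minimal at $p$ with $v_p(\Delta)=6$ and $v_p(j)=0$, and Tate's algorithm then assigns Kodaira type $I_0^{\ast}$. Lemma \ref{swdlemma} yields $E^{d,(0)}(\Q_p) \cong \Z_p$, and the remaining task is to analyze the component group $\Phi$. Geometrically $\Phi \cong (\Z/2)^2$, and its Galois structure is read off from the action on $E^d[2](\overline{\Q_p}) = \{O, (0,0), (\pm i, 0)\}$; since $i \notin \Q_p$ for $p \equiv 3 \pmod 4$, Galois fixes $(0,0)$ and swaps $(\pm i,0)$, forcing $\Phi(\Q_p) \cong \Z/2$. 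The rational 2-torsion point $(0,0) \in E^d(\Q_p)$ realizes the non-trivial class, so the reduction $E^d(\Q_p) \to \Phi(\Q_p)$ is surjective, and splitting the resulting short exact sequence (automatic because $p$ is odd) gives $E^d(\Q_p) \cong \Z_p \times \Z/2$, once more procyclic.

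The main technical obstacle I expect is the $I_0^{\ast}$ computation in the second case: verifying the Kodaira type from the minimal Weierstrass model, identifying the Galois module structure of $\Phi$ via the 2-torsion of $E^d$, and confirming that the non-trivial $\Q_p$-rational component really is attained by the $\Q_p$-rational point $(0,0)$ rather than falling into the identity component. Once these local computations are in place, Lemma \ref{laatstestap} combined with Theorem \ref{big} completes the proof.
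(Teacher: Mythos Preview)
Your overall strategy---reduce via Theorem~\ref{big} and Lemma~\ref{laatstestap} to showing that $E^d(\Q_p)$ is procyclic for each $d$, then split on $v_p(d)\in\{0,1\}$---is exactly the paper's, and your treatment of the case $v_p(d)=0$ (supersingularity giving $\#\wt{E^d}(\F_p)=p+1$, the Weil-pairing bound $n_1\mid\gcd(p-1,p+1)=2$, then ruling out full $2$-torsion via $-1\notin\F_p^{\ast 2}$) matches the paper's argument essentially line for line.

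Where you diverge is the case $v_p(d)=1$, and here your analysis is the correct one. The paper asserts Kodaira type~$\IV$ with component group of order $1$ or $3$; but $j(E^d)=1728$ forces $c_6=0$, which already excludes types $\II,\IV,\IV^\ast,\II^\ast$, and the minimal model $y^2=x^3+d^2x$ has $v_p(\Delta)=6$, $v_p(c_4)=2$, giving type $\I_0^\ast$ as you claim. Your determination $\Phi(\Q_p)\cong\Z/2\Z$ via the Galois action on $E^d[2]$ (equivalently, on the roots of $T(T^2+u)$ in Tate's algorithm, with $u=(d/p)^2$ a square unit and $-u$ a non-square in $\F_p$) is correct. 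The obstacle you flag is harmless: the point $(0,0)$ reduces to the singular point of the Weierstrass model and hence lies outside $(E^d)^{(0)}(\Q_p)$, so it hits the non-identity class of $\Phi(\Q_p)$; alternatively, surjectivity of $E^d(\Q_p)\to\Phi(\F_p)$ is automatic from smoothness of the N\'eron model together with Lang's theorem. You then obtain $E^d(\Q_p)\cong\Z_p\times\Z/2\Z$, which is procyclic for odd $p$, and the proof closes. In short, your argument is sound and in fact repairs a slip in the paper's reduction-type claim while arriving at the same final conclusion.
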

\begin{proof}
Let $p$ be a prime congruent to 3 mod 4. For $d \in \Q_p^{\ast}$, the twist $E^d$ of $E$ is given by the equation $y^2=x^3+d^2x$. By Lemma \ref{laatstestap} and Theorem \ref{big}, it suffices to show that $E^d(\Q_p)$  is procyclic for all $d \in \Q_p^{\ast}$. By changing to a $\Q_p$-isomorphic curve if necessary, it suffices to restrict to the case of $d \in \Q_p^{\ast}$ with $v_p(d)$ equal to $0$ or $1$.

First assume $v_p(d)=0$. Let $\wt{E}$ be the reduction of $E^d$ modulo $p$. Then $\# \wt{E}(\F_p) = p+1$. For $p>3$ this follows from the fact that $\wt{E}$ is supersingular \cite[V.4.5]{Silverman}; for $p=3$, one verifies it by a computation. We claim that $\wt{E}(\F_p)$ is cyclic. Suppose that $(\Z/\ell\Z)^2 \subset \wt{E}(\F_p)$ for some prime $\ell$. Then $p$ must split completely in $\Q(\zeta_\ell)$, giving $\ell \mid p-1$. On the other hand $\ell$ must certainly divide $\# \wt{E}(\F_p)=p+1$: therefore we must have $\ell = 2$. But since $x^3+d^2x$ splits into a linear and a quadratic irreducible polynomial over $\F_p$, we must have $\# \wt{E}(\F_p)[2] = 2$. This gives a contradiction, proving the claim.

By \cite[VII.2.1]{Silverman} and the fact that $E^d$ has good reduction at $p$, we have a short exact sequence:
$$
0 \rightarrow (E^d)^{(1)}(\Q_p) \rightarrow E^d(\Q_p) \rightarrow \wt{E}(\F_p) \rightarrow 0,
$$
where the kernel of reduction $(E^d)^{(1)}(\Q_p)$ of $E^d$ is isomorphic to $\Z_p$ by \cite[IV.6.4(b)]{Silverman}. We conclude that $E^d(\Q_p)$ is topologically isomorphic to the direct product of $\Z_p$ and a cyclic group of order $p+1$. Hence $E^d(\Q_p)$ is procyclic.

Now assume $v_p(d)=1$. Then $E^d$ has additive reduction with Kodaira type IV \cite[C.15]{Silverman}, hence we have a short exact sequence
$$
0 \rightarrow (E^d)^{(0)}(\Q_p) \rightarrow E^d(\Q_p) \rightarrow G \rightarrow 0,
$$ 
where $(E^d)^{(0)}(\Q_p)$ is topologically isomorphic to $\Z_p$ by Lemma \ref{swdlemma}, and $G$ is cyclic of order 1 or 3 (see \cite[C.15]{Silverman}). Again, $E^d(\Q_p)$ is topologically isomorphic to the direct product of $\Z_p$ and a cyclic group of order $1$ or $3$. Hence $E^d(\Q_p)$ is procyclic.
\end{proof}

\noindent
{\it Proof of Theorems \ref{swd} and \ref{cmtrick}.} Theorem \ref{swd} follows from Proposition \ref{oneindigveel} and Theorem \ref{big}. Theorem \ref{cmtrick} follows from Theorem \ref{j1728}.

\section{Acknowledgements}

It is a pleasure to thank Sir Peter Swinnerton-Dyer, Alexei Skorobogatov, Ronald van Luijk, Peter Stevenhagen and Jaap Top for useful discussions and encouragement. 

\bibliography{kummer-art-short}

\end{document}